\newtheorem{theorem}{Theorem}
\newtheorem{lemma}[theorem]{Lemma}
\newtheorem{corollary}[theorem]{Corollary}
\newtheorem{proposition}[theorem]{Proposition}
\theoremstyle{remark}
\newtheorem*{remark}{Remark}
\numberwithin{theorem}{section} \numberwithin{equation}{section}
\newcommand{\mfa}{\mathfrak{a}}
\newcommand{\mff}{\mathfrak{f}}
\newcommand{\mfd}{\mathfrak{d}}
\newcommand{\calN}{\mathcal{N}}
\newcommand{\mfp}{\mathfrak{p}}
\newcommand{\mfm}{\mathfrak{m}}
\newcommand{\C}{\mathbb{C}}
\newcommand{\Q}{\mathbb{Q}}
\newcommand{\N}{\mathcal{N}}
\newcommand{\GL}{{\text {\rm GL}}}
\newcommand{\Irr}{{\text {\rm Irr}}}
\newcommand{\Gal}{{\text {\rm Gal}}}
\newcommand{\rd}{\textnormal{rd}}
\newcommand{\Disc}{\textnormal{Disc}}
\newcommand{\Cunc}{C_1}
\newcommand{\Ccond}{C_2}
\newcommand{\Ccor}{C_3}
\newcommand{\CEV}{C_4}
\newcommand{\Cxbound}{C_{16}}
\newcommand{\Crepexponent}{C_{17}}
\newcommand{\CIKerror}{C_{18}}
\newcommand{\CIKerrorb}{C_{19}}
\begin{document}
\title[Galois Representations for Fields of Small Discriminant]
{On the existence of large degree Galois representations for fields of small discriminant}

\author{Jeremy Rouse}
\address{Department of Mathematics, Wake Forest University, Winston-Salem, NC 27109}
\email{rouseja@wfu.edu}

\author{Frank Thorne}
\address{Department of Mathematics, University of South Carolina,
1523 Greene Street, Columbia, SC 29208}
\email{thorne@math.sc.edu}
\subjclass[2010]{Primary 11R29; Secondary 11R42}

\begin{abstract}

  Let $L/K$ be a Galois extension of number fields. We prove two lower
  bounds on the maximum of the degrees of the irreducible complex
  representations of $\Gal(L/K)$, the sharper of which is conditional
  on the Artin Conjecture and the Generalized Riemann Hypothesis. Our
  bound is nontrivial when $[K : \Q]$ is small and $L$ has small root
  discriminant, and might be summarized as saying that such fields
  can't be ``too abelian.''

\end{abstract}
\maketitle

%FT: Replaced all instances of \Delta with \Disc. Either is fine of course, but good to be consistent.
\section{Introduction}
It is known that the discriminant of a number field cannot be too
small. Minkowski's work on the geometry of numbers implies that
$|\Disc(K)| > (\frac{e^2 \pi}{4} - o(1))^{[K : \Q]}$; we write this
bound as $\rd_K > \frac{e^2 \pi}{4} - o(1)$, where $\rd_K :=
(|\Disc(K)|)^{1/[K:\Q]}$ is the {\itshape root discriminant} of
$K$. These bounds can be improved by using analytic properties of the
Dedekind zeta function of $K$, and this was noticed by Stark (see the
parenthetical comment in the proof of Lemma 4 on page 140 of
\cite{Stark}), and worked out in detail by Andrew Odlyzko \cite{O1976}
in his MIT dissertation (supervised by Stark). The
sharpest known bounds, due to Poitou \cite{poitou} (see also Odlyzko
\cite{O}), are
\begin{equation}\label{eq:p_disc_bounds}
\rd_K \geq (60.8395\dots)^{r_1/[K:\Q]} (22.3816\dots)^{2 r_2/[K:\Q]} - O([K:\Q]^{-2/3}),
\end{equation}
where $[K : \Q] = r_1 + 2 r_2$, and
$r_{1}$ and $r_{2}$ are the numbers of real and complex embeddings of $K$,
respectively.
(The error term in \eqref{eq:p_disc_bounds} can be improved.)
If one assumes the generalized Riemann Hypothesis, the constants above can
be improved to $215.3325\dots$ and $44.7632\dots$ respectively.

Conversely, Golod and Shafarevich \cite{GS} proved that these bounds
are sharp apart from the constants, by establishing the existence of
{\itshape infinite class field towers} $K_1 \subseteq K_2 \subseteq
K_3 \subseteq$ where each $K_{i + 1}/K_i$ is abelian and unramified,
so that each field $K_i$ has the same root discriminant.  Martinet
\cite{martinet} gave the example $K_1 = \Q(\zeta_{11} +
\zeta_{11}^{-1}, \sqrt{-46})$, which has an infinite $2$-class field
tower of root discriminant $92.2\dots$, and Hajir and Maire \cite{HM1,
  HM2} constructed a tower of fields with root discriminants bounded
by $82.2$, in which tame ramification is allowed.

It is expected that fields of small discriminant should be uncommon.
For example, in \cite{O} Odlyzko asks whether there are infinitely
many fields of {\itshape prime degree} of bounded root discriminant;
such fields cannot be constructed via class field towers.  Several
researchers have studied this question in small degree.  Jones and
Roberts \cite{JR} studied the set of Galois number fields $K/\Q$ with
certain fixed Galois groups $G$; for a variety of groups including
$A_4, \ A_5, \ A_6, \ S_4, \ S_5, \ S_6$ they proved that $\rd_K >
%FT: corrected inequality
44.7632\ldots$ apart from a finite list of fields $K$ which they
compute explicitly. Voight \cite{V} studied the set of all totally
real number fields $K$ with $\rd_K \leq 14$, finding that there are
exactly 1229 such fields, each with $[K : \Q] \leq 9$.

In light of this work, it is natural to ask whether Galois extensions
of small absolute discriminant must have any special algebraic properties. (The
analogous problems for non-normal extensions are much more delicate.)
The easiest result to prove is that they {\itshape cannot be abelian}, and
we carry this out over $\Q$ in the introduction (starting with
\eqref{eqn:kw}). In \cite{Leshin}, it is proven that given a number
field $K$, a positive integer $n$, and real number $N$, there are only
finitely many Galois extensions $L/K$ with $\Gal(L/K)$ solvable with
derived length $\leq n$ and with $\rd_{L} \leq N$. In this paper, we
study the representation theory of Galois groups of extensions of
small discriminant, and prove that such Galois groups must have
(relatively) large degree complex representations.

We will prove two versions of this result. The first is the following:
\begin{theorem}\label{thm_uncond}
Let $L/K$ be a Galois extension and let $r$ be the maximum of the degrees of the irreducible
complex representations of $\Gal(L/K)$. Then, there is a constant
$\Cunc$ so that
\begin{equation}
r \geq \frac{1}{\log(\rd_{L})} \left(\Cunc \frac{\log \log [L : \Q]}{\log \log \log [L : \Q]} - \log [K : \Q]\right).
\end{equation}
\end{theorem}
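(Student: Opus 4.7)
My plan is to sandwich the number of conjugacy classes $k(G)$ of $G := \Gal(L/K)$, equivalently $|\Irr(G)|$, between two bounds: one in terms of $r$ and one in terms of the arithmetic of $L$. The lower bound is elementary: from $|G| = \sum_{\chi \in \Irr(G)} \chi(1)^2$ and the hypothesis $\chi(1) \le r$ for every $\chi$, we obtain
\[
k(G) \ge \frac{|G|}{r^2} = \frac{[L:K]}{r^2}.
\]
Thus a small $r$ forces the group $G$ to have many conjugacy classes.

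For the matching upper bound I would invoke an effective Chebotarev density theorem. Every conjugacy class $C \subseteq G$ arises as the Frobenius class of some prime $\mfp$ of $K$ unramified in $L$, and effective versions give an explicit bound on the norm of the smallest such representative. Unconditionally, the Lagarias--Odlyzko theorem (and its refinements, e.g.\ by Thorner and Zaman) supplies a Frobenius representative with $N\mfp \ll |\Disc(L)|^{A}$ for an absolute constant $A$. Combined with the prime ideal theorem $\pi_K(Y) \ll [K:\Q]\, Y/\log Y$, and adding an $O(\omega(\Disc(L/K)))$ correction for ramified primes, this yields an upper bound of the form $k(G) \le [K:\Q]\, Y/\log Y + O(\omega(\Disc(L/K)))$ for an appropriate choice of threshold $Y$.

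Sandwiching the two inequalities, substituting $|\Disc(L)| = \rd_L^{[L:\Q]}$, and solving for $r$ should produce the claimed estimate. The main obstacle is that the unconditional Chebotarev bound on $Y$ is polynomial in $|\Disc(L)|$ and therefore exponential in $[L:\Q]$ once $\rd_L$ is fixed, so a naive substitution makes the resulting inequality vacuous. The slowly growing factor $\frac{\log\log[L:\Q]}{\log\log\log[L:\Q]}$ in the conclusion must emerge from a careful optimization of auxiliary parameters, presumably via a relativized or iterated application of effective Chebotarev to well-chosen intermediate subextensions of $L$, combined with Minkowski-- or Odlyzko-type lower bounds on $\rd_L$ to break the circular dependence on $[L:\Q]$. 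Calibrating these inputs is where I expect the real work to lie; the sharper conditional version (Theorem~2, under the Artin conjecture and GRH) should follow the same outline but with the far cleaner Chebotarev bound polynomial in $\log|\Disc(L)|$, where the same optimization is substantially easier.
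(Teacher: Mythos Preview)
Your lower bound $k(G) \ge [L:K]/r^2$ is fine, but the matching upper bound via effective Chebotarev cannot be made to work, and the gap you yourself flag is fatal rather than a matter of optimization. The least-norm Frobenius bound (unconditional or GRH) is governed by $\log|\Disc(L)| = [L:\Q]\log\rd_L$, so the threshold $Y$ you would feed into $\pi_K(Y)$ already grows at least like a power of $[L:\Q]$; the resulting upper bound on $k(G)$ is then much larger than $[L:K]/r^2$ and the sandwich collapses. No amount of parameter-tuning rescues this, and the same failure persists under GRH (where $Y \asymp ([L:\Q]\log\rd_L)^2$), so your remark that the conditional theorem ``should follow the same outline'' is also off: the paper's conditional argument goes through Rankin--Selberg and a pigeonhole count of $L$-functions, not Chebotarev.

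The paper's unconditional proof takes a genuinely different route and never invokes Chebotarev. The key structural input you are missing is a theorem of Isaacs: if every irreducible representation of $G$ has degree $\le r$, then $G$ contains an \emph{abelian} subgroup of index at most $(r!)^2 < r^{2r}$. This produces an intermediate field $F$ with $[F:K]\le r^{2r}$ and $L/F$ abelian. Now the conductor--discriminant formula for the abelian extension $L/F$ gives
\[
\log(\rd_L)=\log(\rd_F)+\frac{1}{[L:\Q]}\sum_i \log\bigl(\N_{F/\Q}\mff(\chi_i)\bigr),
\]
and one can count Hecke characters of $F$ of bounded conductor \emph{directly} (via elementary bounds on $h_F$ and on the number of ideals of bounded norm), with no zero-free region or Chebotarev input at all. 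The whole point is that the count is controlled by $|\Disc(F)|$ and $[F:\Q]$, quantities bounded in terms of $r$ and $[K:\Q]$, rather than by $|\Disc(L)|$. A short case analysis (large $|\Disc(F)|$; small $|\Disc(F)|$ with large/small $[F:\Q]$) then yields either a lower bound on $\log\rd_L$ of size $\log\log[L:\Q]$ or a lower bound on $r^{2r}[K:\Q]$ of size $\log[L:\Q]/\log\log[L:\Q]$, and it is the inversion $r^{2r}\ge X \Rightarrow r\gg \log X/\log\log X$ that produces the $\frac{\log\log[L:\Q]}{\log\log\log[L:\Q]}$ shape you were hoping to manufacture from Chebotarev.
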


\begin{remark}
The bound of course only makes sense for large $[L : \Q]$.
A straightforward but somewhat lengthy calculation shows that
we may take $\Cunc = \frac{1}{16}$ provided $[L : \Q] \geq e^{e^{8}}$.
\end{remark}

The basic idea of the proof is to regard $L$ as an abelian extension
of an intermediate field $F$ of small degree. The existence of such an
$F$ follows from Theorem 12.23 of \cite{I}, which states that if $G$
is a finite group with the property that all of its irreducible
representations have degree $\leq r$, then $G$ must have an abelian
subgroup of index $\leq (r!)^{2}$. (There is also a converse given in
Problem 2.9 of \cite{I}: if $G$ has an irreducible representation of
degree $> r$, then $G$ cannot have an abelian subgroup of index $\leq
r$.)
%FT: rephrased above since we do not actually need the converse anywhere (right?)
We may then adapt our proof for
the abelian case to prove that either $L$ has small root discriminant,
or $F$ has relatively large degree.

It is also possible to study the representations of $\Gal(L/\Q)$
directly, without first passing to an intermediate extension $F$, via
Artin $L$-functions. We were unable to improve upon Theorem \ref{thm_uncond}
%FT: changed wording here slightly. In general I've hedged my (our) bets a little bit
%about what we can do unconditionally -- it's apparent we can't improve upon the above, which is all we really care about.
this way, but under the hypothesis that Artin $L$-functions are well
behaved we prove the following improvement of Theorem
\ref{thm_uncond}:

\begin{theorem}\label{thm_main}
  Assume that all Artin $L$-functions are entire and satisfy the
  Riemann Hypothesis.  There is a positive constant $\Ccond$ so that if
  $L/K$ is any Galois extension of number fields of degree $d$, then
  $\Gal(L/K)$ must have an irreducible complex representation of
  degree
\begin{equation}\label{eqn_main_thm}
\geq \frac{\Ccond(\log [L : \Q])^{1/5}}{(\log \rd_L)^{2/5} [K : \Q]^{3/5}}.
\end{equation}
\end{theorem}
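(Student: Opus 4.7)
Arguing by contrapositive, I assume every irreducible complex character of $G := \Gal(L/K)$ has dimension at most $r$ and aim to show this forces $\log |G| \ll r^5 n^3 (\log \rd_L)^2$, where $n := [K:\Q]$; this is equivalent to \eqref{eqn_main_thm}. The strategy is to analyze the Artin $L$-functions $L(s,\chi)$ of the irreducible characters of $G$ directly, using the assumed entireness and GRH.

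Two preliminary inputs are key. First, the identity $\sum_i d_i^2 = |G|$ with $d_i := \chi_i(1) \leq r$ gives $N := |\Irr(G)| \geq |G|/r^2$. Second, the conductor-discriminant formula
$$|\Disc(L)| = |\Disc(K)|^{[L:K]} \prod_i \mathfrak{f}(\chi_i)^{d_i}$$
(with $\mathfrak{f}(\chi_i)$ the absolute norm of the Artin conductor), combined with $[L:K] \log |\Disc(K)| \leq [L:\Q] \log \rd_L$, gives
$$\sum_{\chi_i \neq 1} d_i \log \mathfrak{f}(\chi_i) \leq [L:\Q] \log \rd_L.$$
A Chebyshev-type averaging then identifies a large subset $S \subseteq \Irr(G)$, of size at least $N/2$, on which the Artin conductors satisfy $\log \mathfrak{f}(\chi_i) \leq 2 r^2 n \log \rd_L$.

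The main analytic tool is Weil's explicit formula applied to each $L(s, \chi_i)$ with a non-negative smooth test function $\phi_T$ whose Fourier transform is supported in $[-T, T]$. Under GRH the non-trivial zeros lie on $\re s = 1/2$ and the zero side is non-negative, producing (schematically) an inequality
$$\sum_{m, \mathfrak{p}} \frac{\chi_i(\Frob_{\mathfrak{p}}^m)\, \phi_T(m \log N\mathfrak{p})\, \log N\mathfrak{p}}{N\mathfrak{p}^{m/2}} \ll T \log \mathfrak{f}(\chi_i) + T d_i n.$$
These inequalities are then combined using character orthogonality: summing with weights $d_i \overline{\chi_i(g)}$ and choosing $g = 1$ specializes the prime side to a weighted count of split primes of $K$, while the right-hand side is bounded uniformly using the conductor-discriminant estimate from the previous paragraph and the dimension bound $d_i \leq r$ on characters in $S$. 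This produces a tension between the arch/conductor contribution ($\ll T\,r^2 n \log \rd_L$) and a split-prime counting main term that grows in $|G|$.

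The final step is a three-way optimization over $T$, the shape of $\phi_T$, and the subset of characters summed over; balancing the parameters $r$, $n$, and $\log \rd_L$ in the resulting inequality produces the exponents $1/5,\,2/5,\,3/5$ in the statement, with the optimum roughly at $T \asymp r^2 n \log \rd_L$. The chief obstacle will be precisely this delicate three-way balance, together with the bookkeeping required to bound contributions of higher prime powers $m \geq 2$ and of ramified primes on the prime side of the explicit formula and to show they do not overwhelm the main term.
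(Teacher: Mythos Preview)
Your plan has a genuine gap at the step where you ``combine the inequalities using character orthogonality.'' Summing the explicit formulae for the $L(s,\chi_i)$ with weights $d_i\overline{\chi_i(1)}=d_i^2$ (or with weights $d_i$, which is what you need for the orthogonality relation $\sum_i d_i\chi_i(h)=|G|\cdot[h=1]$) simply reproduces the explicit formula for $\zeta_L(s)=\prod_i L(s,\chi_i)^{d_i}$. That formula involves only $\Disc(L)$, the archimedean data of $L$, and the zeros of $\zeta_L$; it carries no information whatsoever about $r$. Concretely, the ``split-prime counting main term that grows in $|G|$'' does not grow in $|G|$: orthogonality multiplies each split prime by $|G|$, but by Chebotarev the split primes of $K$ in $[X,2X]$ have density $1/|G|$, and the two factors cancel. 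What survives is the prime side of the explicit formula for $\zeta_L$, balanced against $[L:\Q]\log\rd_L$ on the conductor side --- exactly the Odlyzko--Poitou inequality, with no $r$ in sight. If instead you restrict the sum to your subset $S$, you lose orthogonality and have no lower bound on the prime side; the weights $d_i\overline{\chi_i(g)}$ for $g\neq 1$ are not positive, so the GRH positivity is lost as well. In short, there is no mechanism in this outline that produces the exponents $1/5,\,2/5,\,3/5$.

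The paper proceeds quite differently. The key analytic input is applied not to $L(s,\chi_i)$ but to the Rankin--Selberg convolutions $L(s,\rho\otimes\overline{\rho'})$: for $\rho\neq\rho'$ irreducible this $L$-function is (under Artin) entire, whereas $L(s,\rho\otimes\overline{\rho})$ has a simple pole. Comparing the two via the explicit formula forces the Frobenius eigenvalues of $\rho$ and $\rho'$ to differ visibly on primes of norm $\ll r^2\log^2 q$, and a pigeonhole argument on the unit circle then bounds the number of degree-$\leq r$, conductor-$\leq q$ Artin $L$-functions by $C^{\,r^3(\log q)^2[K:\Q]}$. Feeding this into the conductor--discriminant formula (this part is close to your Chebyshev step) yields the theorem. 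The appearance of $r$ is driven entirely by the degree $r^2$ and conductor $\ll q^{2r}$ of the Rankin--Selberg $L$-functions, an ingredient your outline is missing.
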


\begin{remark}
Two issues arise when attempting to prove an unconditional version of the
above result. The first is that the unconditional zero-free regions
for $L$-functions have implied constants that depend quite badly on the
degree of the $L$-function involved. (See for example
Theorem~5.33 of \cite{IK}.) The second is the presence of the possible
exceptional zero. Without accounting for the exceptional zero issue,
it seems that the best lower bound we can obtain using the zero-free regions
mentioned above is $r \gg \sqrt{\log \log [L : \Q]}$, for an implied constant depending on $K$ and on
$\rd_L$.
\end{remark}

We now illustrate the nature of our question by handling the case where $L/\Q$ is abelian of degree $> 2$. By
Kronecker-Weber we have $L \subseteq \Q(\zeta_n)$ for some $n$, and
\begin{equation}\label{eqn:kw}
\zeta_{L/\Q}(s) = \zeta(s) \prod_{i = 2}^{[L : \Q]} L(s, \chi_i),
\end{equation}
where $\chi_i$ are Dirichlet characters of conductor $N_i$ for some $N_i | n$.
We have $\Disc(L) = \prod_i N_i$, and therefore
\begin{equation}\label{eq:abelian_rd_bound}
\log(\rd_L)  = \frac{1}{[L : \Q]} \sum_{i = 2}^{[L:\Q]} \log(N_i).
\end{equation}
Let $M := \sqrt{[L : \Q]/2}.$ There are $\leq M^2$ Dirichlet characters with conductor $\leq M$, so that with $[L : \Q] = 2M^2$
the right side of \eqref{eq:abelian_rd_bound} is $> \frac{1}{2} \log M$, so that
\begin{equation}\label{eqn_simple}
\rd_L > \exp\Big( \frac{1}{4} \log\Big( \frac{[L : \Q]}{2} \Big) \Big),
\end{equation}
a bound of the same shape as our theorems.
Although this proof is not complicated, it makes essential use of
class field theory and it seems that the use of sophisticated tools
cannot be avoided.  We could improve our bound somewhat, but note that
it is already stronger than the (conditional) bound $\log(\rd_L) \geq C_2
(\log [L:\Q])^{1/5}$ implied by Theorem \ref{thm_main}.
Observe also that for $L = \Q(\zeta_p)$ we have $\rd_L = p^{\frac{p -
    2}{p - 1}}$ and $[L:\Q] = p - 1 \approx \rd_L$, implying a limit on the scope
    for improvement.

    As an application of Theorem~\ref{thm_main} we can say something
    about unramified extensions of a fixed number field $K$. Of
    course, the maximal unramified \emph{abelian} extension of $K$ is
    the Hilbert class field of $K$ and the degree of this extension is
    $h_{K}$, the order of the ideal class group. However, there are
    number fields $K$ with Galois extensions $L$ unramified at all
    finite primes so that $\Gal(L/K)$ has no non-trivial abelian
    quotients. One of Artin's favorite examples is $K =
    \Q(\sqrt{2869})$, where if $L$ is the splitting field of $x^{5} - x -
    1$ over $\Q$, then $L/K$ is unramified and $\Gal(L/K) \cong
    A_{5}$.

\begin{corollary}
\label{unramified}
Assume that all Artin $L$-functions are entire and satisfy the Riemann
Hypothesis. Let $L_{1}/K$, $L_{2}/K$, $\ldots$, $L_{N}/K$ be linearly
disjoint unramified Galois extensions and suppose that $\Gal(L_{i}/K)$ has
an irreducible representation of degree $r$ for $1 \leq i \leq N$. Then
there is a constant $\Ccor$ so that
\[
  \log(N) \leq \Ccor r^{5} \log^{2}(|\Disc(K)|) [K : \Q].
\]
\end{corollary}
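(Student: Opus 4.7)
The plan is to pass to the compositum $L := L_1 L_2 \cdots L_N$ and invoke Theorem~\ref{thm_main}. Since each $L_i/K$ is unramified, so is $L/K$, so $\rd_L = \rd_K$, and by linear disjointness we have $\Gal(L/K) \cong \prod_{i=1}^N \Gal(L_i/K)$. The irreducible complex representations of this direct product are the external tensor products $\rho_1 \boxtimes \cdots \boxtimes \rho_N$ of irreducibles of the factors.

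Lifting each hypothesized degree-$r$ irreducible $\rho_i$ of $\Gal(L_i/K)$ through the projection $\Gal(L/K) \twoheadrightarrow \Gal(L_i/K)$ produces $N$ pairwise non-isomorphic degree-$r$ irreducible representations of $\Gal(L/K)$. The identity $|G| = \sum_\rho (\dim\rho)^2$ applied to $G = \Gal(L/K)$ then forces $[L:K] \geq Nr^2$, whence
\[
\log[L:\Q] \geq \log N + 2\log r + \log[K:\Q].
\]

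Now apply Theorem~\ref{thm_main} to $L/K$: it furnishes an irreducible representation of $\Gal(L/K)$ of degree at least $\Ccond(\log[L:\Q])^{1/5}/((\log\rd_K)^{2/5}[K:\Q]^{3/5})$. If this theorem-given lower bound on the maximum irreducible degree is matched against an upper bound of $r$, the inequality rearranges to $\log[L:\Q] \leq r^5(\log\rd_K)^2[K:\Q]^3/\Ccond^5$. Using the identity $\log^2|\Disc(K)| = [K:\Q]^2(\log\rd_K)^2$ and the lower bound $\log[L:\Q] \geq \log N$ then yields the stated corollary with $\Ccor = 1/\Ccond^5$ (up to harmless absorbed constants).

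The delicate point — and the main obstacle — is that a naive implementation would take the maximum irreducible degree of $\Gal(L/K)$ to be the product $r^N$ of the factor maxima, yielding only the vacuous inequality $r^{5N} \geq \Ccond^5 \log[L:\Q]/((\log\rd_K)^2[K:\Q]^3)$. The resolution is to exploit the more refined structural fact uncovered above, namely that $\Gal(L/K)$ possesses $N$ \emph{distinct} degree-$r$ irreducibles, rather than merely a single tensor-product representation of degree $r^N$; one must adapt the Artin $L$-function input underlying Theorem~\ref{thm_main} so that the abundance of degree-$r$ representations, not the presence of a single large representation, is what forces $[L:\Q]$ (and hence $N$) to be small.
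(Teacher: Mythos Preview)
Your setup is correct and matches the paper: form the compositum $L$, identify $\Gal(L/K)$ with the direct product, and lift each $\rho_i$ through the projection to obtain $N$ pairwise non-isomorphic degree-$r$ irreducibles of $\Gal(L/K)$. You also correctly diagnose the obstruction to applying Theorem~\ref{thm_main}: the maximal irreducible degree of $\Gal(L/K)$ is not $r$ but can be as large as the product of the maximal degrees of the factors, so the inequality you write in your second paragraph (``matched against an upper bound of $r$'') is unjustified and the argument does not close.

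The gap is that you stop at ``one must adapt the Artin $L$-function input'' without saying how. The paper's resolution is not to adapt Theorem~\ref{thm_main} but to bypass it entirely and invoke Proposition~\ref{prop_artin_diff} directly. The key observation you are missing is that, because each $L_i/K$ is unramified, the Artin conductor satisfies $\mff(\tilde\rho_i)=\mathcal{O}_K$, and hence by \eqref{def_conductor} every lifted representation has the \emph{same} conductor $q(\tilde\rho_i)=|\Disc(K)|^{r}$. Now \eqref{eqn-diff-repns} bounds the number of degree-$r$ representations of conductor $\leq q=|\Disc(K)|^{r}$ by
\[
\Crepexponent^{\,r^{3}\log^{2}(|\Disc(K)|^{r})\,[K:\Q]}
=\Crepexponent^{\,r^{5}\log^{2}|\Disc(K)|\,[K:\Q]},
\]
which immediately gives $\log N \leq \Ccor\, r^{5}\log^{2}|\Disc(K)|\,[K:\Q]$. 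No detour through $[L:\Q]$ or $\rd_L$ is needed, and the maximal irreducible degree of $\Gal(L/K)$ never enters.
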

\begin{remark}
The main theorem proven by Ellenberg and Venkatesh in \cite{EV} shows that
the number $M$ of degree $n$ unramified extensions of $K$ satisfies
\[
  \log(M) \ll_{\epsilon} n^{\epsilon} (n \log(|\Disc(K)|) + \CEV [K:\Q] )
\]
for a constant $\CEV$ depending on $n$.
Because the power of $\log(|\Disc(K)|)$ is
smaller, this result is better for fixed $n$ and varying $K$. However,
since the size of $\CEV$ is not specified, our result is better for
fixed $K$ and varying $n$.
\end{remark}

\begin{remark}
  Another potential application occurs in the case when $r = 2$ and $K
  = \Q$.  Our theorem gives bounds on the number of degree $2$ Artin
  $L$-functions with conductor bounded by $q$. In the odd case, these
  arise from weight $1$ newforms of level $q$ and in the even case,
  these arise (conjecturally) from Maass forms with eigenvalue
  $1/4$. However, we obtain bounds that are worse than polynomial in $q$. In
  \cite{MV}, Michel and Venkatesh use the Petersson-Kuznetsov formula
  to obtain bounds of the form $q^{c + \epsilon}$, where $c$ is a
  constant depending on the type of representation (dihedral,
  tetrahedral, octohedral, or icosahedral).
\end{remark}

Throughout the paper we use the notation $|\Disc(K)|$ for the absolute
value of the discriminant of $K$, $\mathcal{O}_{K}$ for the ring of
integers of $K$, $\rd_{K} = |\Disc(K)|^{1/[K : \Q]}$,
$\mathcal{N}_{K/\Q}(\mathfrak{a})$ for the norm from $K$ to $\Q$ of an
ideal of $\mathcal{O}_{K}$, $h_{K}$ for the order of the ideal class
group of $\mathcal{O}_{K}$, and $\mfd_{L/K}$ for the relative
discriminant of $L$ over $K$. We denote by
$C_{1}$, $C_{2}$, $\ldots$, a sequence of absolute constants. We also
occasionally write $f \ll g$ to mean $f \leq C g$ for some constant $C$,
absolute unless otherwise noted.

 We
provide a little bit of preliminary background in
Section~\ref{sec_background}, and then we prove
Theorem~\ref{thm_uncond} in Section~\ref{sec_elem} and
Theorem~\ref{thm_main} and Corollary~\ref{unramified} in
Section~\ref{sec_main}.

\section*{Acknowledgments}
We would like to thank Kannan Soundararajan for suggesting the basic idea of this paper to us. We'd also like to thank Charlotte Euvrard and 
the referee for a number of very helpful
suggestions.

\section{Background on number fields, discriminants, and conductors}\label{sec_background}
In this section we briefly recall a few facts related to zeta and $L$-functions
associated to number fields, used in the proofs of both Theorem~\ref{thm_uncond}
and Theorem~\ref{thm_main}.

The {\itshape Dedekind zeta function} of a number field $L$ is given by the Dirichlet series
\begin{equation}
\zeta_L(s) = \sum \calN_{L/\Q}(\mfa)^{-s},
\end{equation}
where the sum is over integral ideals of $\mathcal{O}_{L}$. For a Galois extension $L/K$, this zeta function
enjoys the factorization
\begin{equation}\label{eq:ded_non_abelian}
\zeta_L(s) = \zeta_{L/K} (s) =  \prod_{\rho \in \Irr(\Gal(L/K))} L(s, \rho)^{\deg \rho},
\end{equation}
where $\rho$ varies over all irreducible complex representations of $G := \Gal(L/K)$,
and $L(s, \rho)$ is the associated Artin $L$-function.
(For background on Artin $L$-functions see Neukirch \cite{N}; see p. 524 for the proof of \eqref{eq:ded_non_abelian}
in particular.)

This formula is the non-abelian generalization of \eqref{eqn:kw}.
In general, it is not known that the $L(s, \rho)$ are ``proper'' $L$-functions
(as defined on \cite[p. 94]{IK} for example)
and in particular that they are holomorphic in the critical strip.
However, this was conjectured by Artin; we refer to this assumption as the Artin Conjecture and
assume its truth in Section \ref{sec_main}.
%FT: We had an abbreviation AC, but it is not used anywhere so I removed it.

\begin{remark} As a consequence of Brauer's theorem on group
  characters \cite[p. 522]{N}, It is known that the Artin
  $L$-functions are quotients of Hecke $L$-functions, and therefore
  meromorphic, and this suffices in many applications. For example,
  Lagarias and Odlyzko \cite{LO} used this fact to prove an
  unconditional and effective version of the Chebotarev density
  theorem.
\end{remark}

If $\Gal(L/K)$ is abelian, then the representations are all
one-dimensional, and class field theory establishes that the
characters of $\Gal(L/K)$ coincide with Hecke characters of $L/K$, so
that \eqref{eq:ded_non_abelian} becomes
\begin{equation}\label{eqn:kw2}
\zeta_{L/K}(s) = \zeta_K(s) \prod_{i = 2}^{[L:K]} L(s, \chi_i),
\end{equation}
where the product ranges over Hecke characters of $K$.  As in our
application of \eqref{eqn:kw}, we will argue that there cannot be too
many characters $\chi$ or representations $\rho$ of small conductor
(and, in the latter case, of bounded degree).

We can use \eqref{eq:ded_non_abelian} to derive more general versions of
\eqref{eq:abelian_rd_bound}: It follows \cite[p. 527]{N} from \eqref{eq:ded_non_abelian} that the relative discriminant $\mfd(L/K)$ satisfies the formula
\begin{equation}\label{disc_eqn1}
\mfd(L/K) = \prod_{\rho \in \Irr(G)} \mff(\rho)^{\deg \rho},
\end{equation}
where the ideal $\mff(\rho)$ of $K$ is the Artin conductor associated to $\rho$.

If $L/K$ is abelian then
we can write this as $\mfd(L/K) = \prod_{\chi_i} \mff(\chi_i)$.
Taking norms down to $\Q$ and using the relation (\cite{N}, p. 202)
\begin{equation}\label{eqn:rel_disc}
|\Disc(L)|
= |\Disc(K)|^{[L:K]} \calN_{K/\Q} (\mfd_{L/K}),
\end{equation}
we obtain
\begin{equation}\label{eqn:rd_gen}
\log(\rd_L) = \log(\rd_K) + \frac{1}{[L : \Q]} \sum_i \log( \N_{K/\Q} \mff(\chi_i)).
\end{equation}

If $L/K$ is not necessarily abelian, then
%FT: added "necessarily" here, it's also true if L/K is abelian.
the conductor $q(\rho)$ of $L(s, \rho)$ is related to $\mff(\rho)$ by the formula
\begin{equation}\label{def_conductor}
q(\rho) = |\Disc(K)|^{\deg \rho} \N_{K/\Q} \mff(\rho).
\end{equation}
Taking absolute norms in \eqref{disc_eqn1}, multiplying by
$|\Disc(K)|^d$, and again using \eqref{eqn:rel_disc} we obtain
\begin{equation}\label{disc_eqn3}
|\Disc(L)| = \prod_{\rho \in \Irr(G)} q(\rho)^{\deg \rho}.
\end{equation}

\section{Proof of Theorem \ref{thm_uncond}}\label{sec_elem}
We first prove a lemma bounding some quantities which occur in the
proof.
\begin{lemma}\label{lem:cn_bound}
  For a number field $F$ of degree $f$, the
  following hold:
\begin{enumerate}
\item The number of ideals $\mfa$ of $\mathcal{O}_F$ with $\N(\mfa) <
  Y$ is bounded by $e Y (1 + \log Y)^{f}$.
\item We have $h_F < e |\Disc(F)|^{1/2} \big(1 + \frac{1}{2} \log |\Disc(F)| \big)^f$.
\end{enumerate}
\end{lemma}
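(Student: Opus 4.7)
For part (1), my plan is to bound the counting function by the Dedekind zeta function evaluated slightly to the right of $s=1$. For any $s > 0$, each term with $\N(\mfa) < Y$ contributes $(Y/\N(\mfa))^{s} \geq 1$, so
$$\#\{\mfa : \N(\mfa) < Y\} \leq Y^{s} \sum_{\mfa} \N(\mfa)^{-s} = Y^{s} \zeta_{F}(s).$$
Next, I would compare Euler factors to obtain $\zeta_{F}(s) \leq \zeta(s)^{f}$: at each rational prime $p$ there are at most $f$ primes $\mfp \subset \mathcal{O}_{F}$ lying above $p$, each with $\N(\mfp) \geq p$, so $\prod_{\mfp \mid p}(1-\N(\mfp)^{-s})^{-1} \leq (1-p^{-s})^{-f}$. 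Combined with the elementary estimate $\zeta(s) \leq 1 + 1/(s-1)$ (via comparison of the sum to an integral), this gives a bound of the form $Y^{s}\bigl(1 + 1/(s-1)\bigr)^{f}$. Choosing $s = 1 + 1/\log Y$ makes $Y^{s} = eY$ and $1 + 1/(s-1) = 1 + \log Y$, producing the stated inequality (for $Y > 1$; the bound is trivial for $Y \leq 1$).

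For part (2), I would invoke Minkowski's geometry-of-numbers bound: every ideal class of $\mathcal{O}_{F}$ contains an integral ideal of norm at most $M_{F} := (f!/f^{f})(4/\pi)^{r_{2}} \sqrt{|\Disc(F)|}$. A short check (e.g.\ via Stirling, noting that $(4/\pi)^{1/2}/e < 1$) verifies $M_{F} \leq \sqrt{|\Disc(F)|}$ for all $f \geq 1$. Hence $h_{F}$ is bounded above by the number of integral ideals of $\mathcal{O}_{F}$ of norm at most $\sqrt{|\Disc(F)|}$, and applying part (1) with $Y = \sqrt{|\Disc(F)|}$ (the argument above works equally well with $\leq$ in place of $<$) yields the desired bound.

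Neither step should present a serious obstacle; both are standard analytic inputs. The only mild balancing act is the choice of $s$ in part (1), and the only verification needed in part (2) is that the Minkowski constant does not exceed $1$, so that no extra multiplicative factor clutters the conclusion.
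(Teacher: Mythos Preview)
Your proposal is correct and follows essentially the same argument as the paper: both use Rankin's trick to bound the ideal count by $Y^{\sigma}\zeta(\sigma)^{f}$ with $\sigma = 1 + 1/\log Y$, and both deduce part (2) from part (1) via the Minkowski bound that every ideal class contains an integral ideal of norm at most $\sqrt{|\Disc(F)|}$. The only cosmetic difference is that the paper first invokes the coefficient-wise inequality $a_n(F)\le d_f(n)$ whereas you bound $\zeta_F(s)\le \zeta(s)^f$ directly via Euler factors, which amounts to the same thing.
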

\begin{proof}
  This is standard and we give an easy proof inspired by
  \cite[p. 68]{CM}. We have that
$\zeta_{F}(s) = \sum_{n=1}^{\infty} \frac{a_{n}(F)}{n^{s}}$,
where $a_{n}(F)$ is the number of integral ideals of norm $n$ in
$\mathcal{O}_{F}$. The coefficient-wise bound
$\zeta_{F}(s) < \zeta(s)^{f} = \sum_n d_f(n) n^{-s}$ yields that for $\sigma > 1$,
$$\sum_{n < Y} d_f(n) < \sum_n d_f(n) (Y/n)^{\sigma} = Y^{\sigma} \zeta(\sigma)^f.$$ We now choose
$\sigma = 1 + 1/\log Y$, and use the fact that $\zeta(\sigma) < 1 +
\frac{1}{\sigma - 1}$ for $\sigma > 1$.

The second part follows from the classical Minkowski bound (see,
e.g. \cite[Ch. 1.6]{N}), which implies that each ideal class in
$\mathcal{O}_F$ is represented by an ideal $\mfa$ with $\N(\mfa) <
\sqrt{|\Disc(F)|}$.
 \end{proof}

\begin{proof}[Proof of Theorem \ref{thm_uncond}]

The proof is similar to that of \eqref{eqn_simple}, but we will need to work with messier inequalities.

By the character theory remarks after the theorem, $L$ has a subfield $F$, for which $L/F$ is abelian, such that
$[F:K] \leq (r!)^2 < r^{2r}$. We assume that $[L : \Q]$ and therefore $\rd_L$
are bounded below by absolute
constants ($[L : \Q] \geq e^{e^8}$ suffices). 
Depending on the relative sizes of these quantities, we will see that either
\begin{equation}\label{eq:uncond_r_bound}
r \geq C_{5} \frac{ \log \log([L:\Q])}{\log \log \log([L:\Q])} - \log [K:\Q],
\end{equation}
or
\begin{equation}\label{eq:uncond_rd_bound}
\log(\rd_L) \geq C_{6} \log \log([L:\Q]),
\end{equation}
for positive constants $C_{5}$ and $C_{6}$, implying the theorem. There is no obstacle to determining
particular values for these constants, but for simplicity we omit the details.

We begin with the generalization \eqref{eqn:rd_gen} of \eqref{eq:abelian_rd_bound},
which said that
\begin{equation}\label{eqn:rd_gen2}
\log(\rd_L) = \log(\rd_F) + \frac{1}{[L : \Q]} \sum_i \log( \N_{F/\Q} \mff(\chi_i)),
\end{equation}
where $\chi_i$ are distinct Hecke characters of $F$.
The number of characters of conductor $\mfm$ is less than $2^{[F:\Q]} h_F
\N(\mfm)$ \cite[Theorem V.1.7, pg. 146]{milne}, and Lemma
\ref{lem:cn_bound} bounds both $h_F$ and the number of $\mfm$ which
can appear, so that for $Y \geq 1$ the number of characters whose conductor has norm $\leq Y$ is
bounded above by $e^2 Y^2 |\Disc(F)|^{1/2} \big(2 + \log(Y^2 |\Disc(F)|)
\big)^{2 [F : \Q]}$.
%FT: Picky point: The second Y^2 perhaps ``should'' be Y, at least that is what
%comes out naturally, but the above is true and it is what both of us verified --
%so I think we may as well leave it alone. [REVISED: I no longer believe this, I like what we have in the paper]

Given $[L : \Q]$ and $[F : \Q]$, suppose that $Y > e^{-1} |\Disc(F)|^{-1/2}$ is defined by the equation
\begin{equation}\label{def_y}
\frac{[L : \Q]}{2 [F : \Q]} = e^2 Y^2 |\Disc(F)|^{1/2} \big(2 + \log(Y^2 |\Disc(F)|) \big)^{2 [F : \Q]},
\end{equation} so that
in \eqref{eqn:rd_gen2} there are at least $\frac{[L : \Q]}{2 [F : \Q]}$ characters of conductor $> Y$, so that
\begin{equation}\label{eq:rd_2}
\log(\rd_L) \geq \log(\rd_F) + \frac{1}{2 [F : \Q]} \log Y.
\end{equation}
(Observe that we do not necessarily have $Y > 1$, for example if $L$ is the Hilbert class field
of $F$.)
We divide our analysis of \eqref{eq:rd_2} into three cases
and prove that each implies \eqref{eq:uncond_r_bound} or \eqref{eq:uncond_rd_bound}.

{\bf Large discriminant.}
If $\Disc(F) \geq [L : \Q]^{1/10}$, we ignore \eqref{eq:rd_2} and instead note that
$\log(\rd_F) \geq \frac{1}{10 [F : \Q]} \log([L : \Q])$
and so
\begin{equation}
\log(\rd_L) \geq \frac{1}{10 r^{2r} [K : \Q]} \log([L : \Q]),
\end{equation}
and we obtain at least one of \eqref{eq:uncond_r_bound} and \eqref{eq:uncond_rd_bound} depending on
whether or not $r^{2r} [K : \Q] > (\log [L : \Q])^{1/2}$.

We assume henceforth that $\Disc(F) < [L : \Q]^{1/10}$, which implies that
$[F : \Q] < \frac{1}{10} \log([L : \Q])$ for $[F : \Q] \geq 3$, and write
$Y' := \max(Y, 100)$ and
$Z := Y'^2 |\Disc(F)|^{1/2}$.

{\bf Small discriminant and large degree.}
Assume first that either $Z \leq (2 + 2 \log Z)^{2 [F : \Q]}$ or $Y < 100$.
Applying our upper bounds on $\Disc(F)$, $[F : \Q]$,
and $Z$, we see that
\begin{equation}
[L : \Q]^{4/5} \leq C_{7} \Big(4 \log (Y'^2 |\Disc(F)|)\Big)^{4 [F : \Q]}.
\end{equation}
Taking logarithms
and applying the bound\footnote{
If $Y' = 100$, this follows from $\Disc(F) < [L : \Q]^{1/10}$. If
$Y' = Y$, this follows from \eqref{def_y}.}
$Y'^2 |\Disc(F)| < [L : \Q]$ we obtain
\begin{equation}
\log([L : \Q]) \leq C_{8} [F : \Q] \log \log([L : \Q]),
\end{equation}
so that $\frac{\log([L : \Q])}{\log \log([L : \Q])} \leq C_{9} [K : \Q] r^{2r}$, 
which implies that $r \geq C_{10} \frac{\log \log([L : \Q])}{\log \log \log([L : \Q])} - \log([K : \Q])$.

{\bf Small discriminant and small degree.}  Finally, assume that $Z >
(2 + 2 \log Z)^{2 [F : \Q]}$ and $Y \geq 100$.  Then $[F : \Q] \leq
C_{11} \frac{\log Z}{\log \log Z}$, and our bound on $\Disc(F)$
implies that $\log Z \leq C_{12} \log Y$, so that $[F : \Q] \leq C_{13}
\frac{\log Y}{\log \log Y}$. We thus have $\frac{\log Y}{2 [F : \Q]}
\geq C_{14} \log \log Y$ and so by \eqref{eq:rd_2}
\[
  \log(\rd_{L}) \geq \log(\rd_{F}) + \frac{1}{2 [F : \Q]} \log(Y)
  \geq \log(\rd_{F}) + C_{14} \log \log(Y).
\]
Finally, \eqref{def_y} implies that
$\log Y \leq \frac{1}{2} \log([L : \Q])$, giving us
\begin{equation}
\log(\rd_L) \geq \log(\rd_F) + C_{15} \log \log([L : \Q]).
\end{equation}

This completes our list of cases, and hence the proof.
\end{proof}

\section{Proof of Theorem \ref{thm_main}}
\label{sec_main}
In the proof we will assume familiarity
with Artin $L$-functions and Rankin-Selberg convolutions, as described in
Neukirch \cite{N} (and Section \ref{sec_background}) and Iwaniec-Kowalski \cite{IK} respectively.
We also assume the truth of the Artin Conjecture.
There is no theoretical obstacle to carrying out the methods of this section
without any unproved hypotheses, but when we tried this the
error terms in \eqref{eqn_rs_error} were too large to
%prove anything.
be of interest.
%FT: hedged our bets a little bit...

As in the non-abelian case, we need to bound the number of possible
$q(\rho)$ of bounded conductor (and now also of bounded
degree). However, in general the representations $\rho$ are not (yet!)
known to correspond to arithmetic objects which might be more easily
counted. Instead, in Proposition \ref{prop_artin_diff} we
(conditionally) bound the number of possible $L$-functions. Assuming
GRH and the Artin Conjecture, we will see that any two such Artin
$L$-functions must have rather different Dirichlet series
representations, because their Rankin-Selberg convolution cannot have
a pole. A pigeonhole-type argument will then allow us to bound the
number of possible $L$-functions.

After proving Proposition \ref{prop_artin_diff} we will conclude as before. In brief, if $L/K$ is a Galois extension of large degree with
many representations of small degree, then many of these representations will have large conductor,
and so $L$ will have large discriminant.

\begin{proposition}\label{prop_artin_diff} Assume that the Artin Conjecture
  and Riemann Hypothesis hold for Artin $L$-functions, and let $\rho$
  and $\rho'$ be distinct irreducible nontrivial representations of
  $\Gal(L/K)$ of degree $r$ and conductor $\leq q$ (as defined in
  \eqref{def_conductor}). The Artin $L$-series of $\rho$ and $\rho'$
have Euler products
\[
  L(s,\rho) = \prod_{\mathfrak{p}} \prod_{i=1}^{d}
  (1 - \alpha_{i,\rho}(\mathfrak{p}) N(\mathfrak{p})^{-s})^{-1},
  \quad 
  L(s,\rho') = \prod_{\mathfrak{p}} \prod_{i=1}^{d}
  (1 - \alpha_{i,\rho'}(\mathfrak{p}) N(\mathfrak{p})^{-s})^{-1}.
\]
Assume that $\log q > r [K : \Q]$.
Then, for $X \geq \Cxbound r^2 \log^2 q$ we have
\begin{equation}\label{eqn-artin-diff}
\sum_{\substack{\N \mfp \in [X, 2X]\\ \mathfrak{p} \text{ unramified }}} \sum_{1 \leq i \leq r}
| \alpha_{i, \rho}(\mfp) -  \alpha_{i, \rho'}(\mfp) |
\geq  \frac{X}{2 r \log X}.
\end{equation}
Furthermore, the number of representations of degree $\leq r$ and
conductor $\leq q$ is
\begin{equation}\label{eqn-diff-repns}
\leq \Crepexponent^{r^3 \log^2(q) [K : \Q]},
\end{equation}
for an absolute constant $\Crepexponent$.
% \geq 1.75$.
\end{proposition}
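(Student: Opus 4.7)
The plan is to use Rankin--Selberg convolutions plus the GRH explicit formula to quantify how much the local data of two distinct representations must differ on average over primes in a short interval, and then deduce the counting bound by a pigeonhole/discretization argument on Satake parameters.

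For \eqref{eqn-artin-diff}, I would work with the virtual representation $\rho-\rho'$ and its self-convolution. Formally,
\[
L\bigl(s,(\rho-\rho')\otimes\overline{(\rho-\rho')}\bigr) \;=\; \frac{L(s,\rho\otimes\bar\rho)\,L(s,\rho'\otimes\bar\rho')}{L(s,\rho\otimes\bar\rho')\,L(s,\rho'\otimes\bar\rho)}.
\]
Under the Artin conjecture, each factor is a proper $L$-function, and by Schur orthogonality the numerator has a double pole at $s=1$ while the denominator is entire (and under GRH also nonvanishing) at $s=1$. The Dirichlet coefficient of $-L'/L$ of this ratio at an unramified prime $\mfp$ is exactly $|\tr\rho(\Frob_\mfp) - \tr\rho'(\Frob_\mfp)|^2 \log N\mfp$. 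The standard GRH explicit formula applied to this $L$-function, with analytic conductor controlled by $q$, $r$, and $[K:\Q]$, will yield
\[
\sum_{\substack{N\mfp \in [X,2X]\\ \mfp \text{ unramified}}} |\tr\rho(\Frob_\mfp) - \tr\rho'(\Frob_\mfp)|^2 \log N\mfp \;\gg\; X
\]
for $X \geq \Cxbound r^2 \log^2 q$, with the hypothesis $\log q > r[K:\Q]$ arranged precisely so that the archimedean part of the conductor and the degree factors do not swamp the main term. Combining the pointwise bound $|\tr\rho - \tr\rho'|(\Frob_\mfp) \leq 2r$ with $\log N\mfp \leq \log(2X)$ upgrades this $L^2$ estimate to
\[
\sum_{\substack{N\mfp \in [X,2X]\\ \mfp \text{ unramified}}} |\tr\rho(\Frob_\mfp) - \tr\rho'(\Frob_\mfp)| \;\gg\; \frac{X}{r\log X},
\]
and the triangle inequality $|\tr\rho - \tr\rho'|(\Frob_\mfp) \leq \sum_i |\alpha_{i,\rho}(\mfp) - \alpha_{i,\rho'}(\mfp)|$ then delivers \eqref{eqn-artin-diff}.

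For the counting bound \eqref{eqn-diff-repns}, I would pigeonhole on discretized Satake parameters. Since for an unramified prime $\mfp$ the eigenvalues $\alpha_{i,\rho}(\mfp)$ of $\rho(\Frob_\mfp)$ are roots of unity, they lie on the unit circle, which I partition into $M \asymp r^2$ arcs of diameter $\delta \asymp 1/r^2$. Associate to each representation $\rho$ its \emph{signature}, the tuple recording which arc contains $\alpha_{i,\rho}(\mfp)$ as $(i,\mfp)$ ranges over $\{1,\dots,r\}$ times the set of unramified primes with $N\mfp \in [X,2X]$. If two distinct $\rho \ne \rho'$ shared the same signature, then the total Satake discrepancy would be at most $\delta \cdot r \cdot \pi_K(2X) \ll X/(r\log X)$, contradicting \eqref{eqn-artin-diff} once $\delta$ is chosen small enough. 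The number of representations is therefore at most $M^{r \cdot \pi_K(2X)}$. Inserting $X = \Cxbound r^2 \log^2 q$ and the prime-counting estimate $\pi_K(2X) \ll [K:\Q] X/\log X$ (where the $[K:\Q]$ factor accounts for the multiplicity of prime ideals above each rational prime) produces an exponent of order $r^3 \log^2(q) [K:\Q]$, matching \eqref{eqn-diff-repns}.

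The main obstacle will be the first step: tracking the explicit formula error for a Rankin--Selberg $L$-function of degree $r^2[K:\Q]$ over $\Q$, whose analytic conductor has both a ``finite part'' of size $O(\log q)$ and an archimedean part of size $O(r^2[K:\Q]\log X)$. Balancing these so that the main term $X$ truly dominates the error exactly at the threshold $X \asymp r^2 \log^2 q$ — while absorbing contributions from ramified primes and from higher prime powers in the explicit formula, and using the hypothesis $\log q > r[K:\Q]$ to prevent the archimedean factor from taking over — is the delicate piece of bookkeeping on which everything else rests.
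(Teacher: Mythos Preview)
Your approach is correct and close in spirit to the paper's, with two small differences worth flagging.

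For \eqref{eqn-artin-diff}, the paper compares only the two tensor products $\rho\otimes\overline{\rho}$ and $\rho\otimes\overline{\rho'}$ rather than expanding the full square $(\rho-\rho')\otimes\overline{(\rho-\rho')}$. After the explicit formula this yields a lower bound on $\sum_{\mfp}\sum_{i,j}\bigl|\alpha_{i,\rho}(\mfp)\bigl(\overline{\alpha_{j,\rho}(\mfp)}-\overline{\alpha_{j,\rho'}(\mfp)}\bigr)\bigr|$; since $|\alpha_{i,\rho}(\mfp)|=1$ at unramified primes, the inner sum is exactly $r\sum_j|\alpha_{j,\rho}(\mfp)-\alpha_{j,\rho'}(\mfp)|$, and dividing by $r$ finishes. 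Your route through $|\tr\rho-\tr\rho'|^2$ is equally valid but needs the two extra steps (the $L^2\to L^1$ passage via $|\tr\rho-\tr\rho'|\le 2r$, then the triangle inequality from trace to eigenvalues); the paper's shortcut avoids both by never seeing the trace. Either way the conductor bookkeeping is the same: the paper bounds $\log\mathfrak{q}(\rho\otimes\psi)\le 3r\log q$ via a direct ramification-group argument, and this is where the hypothesis $\log q>r[K:\Q]$ enters.

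For \eqref{eqn-diff-repns}, your arc count $M\asymp r^2$ is slightly off: to force the total discrepancy below $X/(2r\log X)$ you need $\delta\lesssim 1/(r^2[K:\Q])$, since the number of primes in $[X,2X]$ is $\asymp [K:\Q]X/\log X$, not $X/\log X$. This does not affect the final exponent, because the hypothesis $\log q>r[K:\Q]$ gives $\log X\gtrsim \log(r^2[K:\Q])$, so the extra $\log[K:\Q]$ in $\log M$ is absorbed by the denominator; this is exactly how the paper closes the argument.
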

\begin{remark} The bound \eqref{eqn-diff-repns} is rather
  simple-minded, and we could remove the factor $[K : \Q]$ by instead
  insisting that $q$ be sufficiently large in terms of $K$.
\end{remark}
\begin{proof}
  This is essentially Proposition 5.22 of Iwaniec and Kowalski
  \cite{IK}; although our conclusion is stronger, our proof is
  essentially the same.

  Consider the tensor product representations $\rho \otimes
  \overline{\rho}$ and $\rho \otimes \overline{\rho'}$, whose
  $L$-functions are equal to the Rankin-Selberg convolutions $L(s,
  \rho \otimes \overline{\rho})$ and $L(s, \rho \otimes
  \overline{\rho'})$ (so that the notation is not ambiguous). A simple
  character-theoretic argument shows that the trivial representation
  does not occur in $\rho \otimes \overline{\rho'}$, while it occurs
  with multiplicity one in $\rho \otimes \overline{\rho}$.  Assuming
  the Artin conjecture, then, $L(s, \rho \otimes \overline{\rho'})$
  and $L(s, \rho \otimes \overline{\rho}) \zeta(s)^{-1}$ are entire
  functions.

  Let $\phi$ be a smooth test function with support in
  $[1, 2]$, image in $[0, 1]$, and $\widehat{\phi}(0) = \int_{1}^{2} \phi(t) dt \in (\frac{3}{4}, 1)$;
  throughout this section, all implied constants (including $\Cxbound$, etc.) depend on our fixed choice of $\phi$.
   Also, let $X \geq 2$, with stricter lower bounds to be imposed later.  Then,
  using the ``explicit formula'' (Theorem 5.11 of \cite{IK}), we find
  (see p. 118 of \cite{IK}) that, assuming GRH and the Artin Conjecture,
\begin{equation}\label{rs_1}
\left|\sum_n \Lambda_{\rho \otimes \overline{\rho'}}(n) \phi (n / X)\right|
\ll \sqrt{X} \log(\mathfrak{q}(\rho \otimes \overline{\rho'})),
\end{equation}
\begin{equation}\label{rs_2}
\left|\sum_n \Lambda_{\rho \otimes \overline{\rho}}(n) \phi (n / X)
 - \widehat{\phi}(0) X \right|
\ll \sqrt{X} \log(\mathfrak{q}(\rho \otimes \overline{\rho})),
\end{equation}
where the coefficients
$\Lambda$ are defined, for any $L$-function $L(f, s)$, by the relation
$$\sum_n \Lambda_f(n) n^{-s} = - \frac{L'}{L}(f, s).$$
%%%%%%%%
Also, $\mathfrak{q}(\rho) = q(\rho) \prod_{j=1}^{r} (|\kappa_{j}| +
3)$ denotes the analytic conductor of $\rho$ defined by equation (5.8)
of \cite{IK}. To bound this analytic conductor, we require information
about the gamma factors of Artin $L$-functions and the conductor
of $\rho \otimes \psi$ (where $\psi = \overline{\rho}$ or $\overline{\rho'}$).

The fact that the $L(s,\rho)$ are factors of the Dedekind zeta function, and the fact that the Dedekind zeta function only has gamma factors $\Gamma\left(\frac{s}{2}\right)$ and $\Gamma\left(\frac{s+1}{2}\right)$ implies that $0 \leq
\kappa_{j} \leq 1/2$ for all $j$. We have that $q(\rho \otimes \psi)
= |\Disc(K)|^{\deg(\rho \otimes \psi)} \N_{K/\Q}(\mathfrak{f}(\rho \otimes \psi))$ where
\[
  \mathfrak{f}(\rho \otimes \psi) = \prod_{\mathfrak{p} \nmid \infty}
  \mathfrak{p}^{\mathfrak{f}_{\mathfrak{p}}(\rho \otimes \psi)},
  \qquad f_{\mathfrak{p}}(\rho \otimes \psi)
  = \sum_{i=0}^{\infty} \frac{g_{i}}{g_{0}} {\rm codim}~V_{\rho \otimes \psi}^{G_{i}}.
\]
Here $V_{\rho \otimes \psi}$ is a vector space affording the representation 
$\rho \otimes \psi$, $G_{i}$ is the $i$th ramification group, and $g_{i} = |G_{i}|$ (this definition is from page 527 of \cite{N}). Noting that
\[
  f_{\mathfrak{p}}(\rho) = \sum_{i=0}^{\infty} \frac{g_{i}}{g_{0}} {\rm codim}~V_{\rho}^{G_{i}}, \text{ and }
  f_{\mathfrak{p}}(\psi) = \sum_{i=0}^{\infty} \frac{g_{i}}{g_{0}} {\rm codim}~V_{\psi}^{G_{i}}.
\]
If $a = {\rm codim}~V_{\rho}^{G_{i}}$ and $b = {\rm codim}~V_{\psi}^{G_{i}}$,
$G_{i}$ fixes a subspace of $V_{\rho \otimes \psi}$
of dimension at least $(r-a)(r-b)$ and it follows from this
that $f_{\mathfrak{p}}(\rho \otimes \psi) \leq r (f_{\mathfrak{p}}(\rho) + f_{\mathfrak{p}}(\psi))$ and
\begin{equation}
  q(\rho \otimes \psi) \leq |\Disc(K)|^{r^{2}} \N_{K/\Q}(\mathfrak{f}(\rho) \mathfrak{f}(\psi)).
\end{equation}
Combining these estimates for the analytic conductor yields the bound
\begin{align*}
  \log(\mathfrak{q}(\rho \otimes \psi))
  &\leq \left(2 r^{2} + r^{2} \log |\Disc(K)| + r \log \N_{K/\Q}(\mathfrak{f}(\rho) \mathfrak{f}(\psi))\right)\\
  &\leq 3 r \log(q).
\end{align*}
%%%%%%%%
%FT: Would you please elaborate more in the above section? Both the part where you suggest,
%and also the bound that comes before it (i.e., the relevance of being factors of the Dedekind zeta functions)? Thanks.
%JR: I have done so. This gives the context for the bounds we use, but I didn't
% give detail about how one actually proves (4.5). Do you think I should
% say more?
Let $\alpha_{i, \rho}$ and $\alpha_{i, \rho'}$ be the Frobenius
eigenvalues for $\rho$ and $\rho'$ respectively, for $1 \leq i \leq
r$. Then, unpacking the definition of the Rankin-Selberg convolution
(or, equivalently, the tensor product representation), we conclude
that
\begin{equation}\label{eqn_rs_error}
\sum_{\N \mfp \in [X, 2X]} \sum_{1 \leq i, j \leq r}
\left|\alpha_{i, \rho}(\mfp) \overline{\alpha_{j, \rho}(\mfp)} - \alpha_{i,
  \rho}(\mfp) \overline{\alpha_{j, \rho'}(\mfp)}\right| \geq
\frac{\widehat{\phi}(0) X}{\log (2X)} - \frac{\CIKerror \sqrt{X}}{\log X} (r \log
q).
\end{equation}

The sum above is over prime ideals; we have removed the prime powers
which contribute (assuming $X$ is large)
$
< \frac{2r^{2} [K : \Q]
  \sqrt{X}}{\log(X)}
< \frac{2r \log(q)
  \sqrt{X}}{\log(X)},
 $
which is contained in the error term above.
Next, we remove the terms coming from ``ramified primes'', those for
which $\alpha_{i,\rho}(\mathfrak{p}) = 0$ for some $i$. These occur precisely
at the primes for which $\mathfrak{p} | \mathfrak{f}(\rho)$ and
the number of these between $X$ and $2X$ is at most $\frac{\log q}{\log X}$.
Noting that for a prime $\mathfrak{p}$ that is unramified in $L/K$,
$|\alpha_{i,\rho}(\mathfrak{p})| = 1$ for all $i$, we get
\begin{equation}
\label{sumisbig}
\sum_{\substack{\N \mfp \in [X, 2X],\\ \mathfrak{p} \text{ unramified }}} \sum_{1 \leq j \leq r}
| \alpha_{j, \rho}(\mfp) -  \alpha_{j, \rho'}(\mfp) |
\geq \frac{\widehat{\phi}(0) X}{r \log (2X)} -
\CIKerrorb \left(\frac{\sqrt{X} (\log q) + r \log q}{\log X}\right).
\end{equation}
%FT: in above, changed i to j. Of course it's completely equivalent, but now it is less easy to get confused about
%how this follows from the previous line. (I managed to get tripped up)
%Also, should the last error term be r \log q instead of r^2 \log q? I believe this is both allowable and necessary
%for the next line, would you please check?
If $X \geq \Cxbound r^{2} \log^{2}(q)$ with $\Cxbound =  \max\Big(2^{14}, 100 \CIKerrorb^2\Big)$,
the error term above is $\leq \frac{ X}{5 r \log X}$ and the main term is
$\geq \frac{7X}{10 r \log X}$,
establishing the first part of the proposition.

The second part follows easily from the first. Let $M$ be the number of
primes in the above sum; then, if
\[
  |\alpha_{i,\rho}(\mathfrak{p}) - \alpha_{i,\rho'}(\mathfrak{p})|
  \leq \frac{X}{2 r^{2} \log(X) M}
\]
for all $i$ and unramified $\mathfrak{p}$, then \eqref{eqn-artin-diff} is contradicted.
Note that since $\mathfrak{p}$ is unramified, $\alpha_{i,\rho}(\mathfrak{p})$ and
and $\alpha_{i,\rho'}(\mathfrak{p})$ lie on the unit circle. No more than $2 \pi Y$ 
points can be placed
on the unit circle with pairwise distances at least $1/Y$. Hence,
by the pigeonhole principle, there can be at most
\[
  N = \left(\frac{4 \pi r^{2} \log(X) M}{X}\right)^{r (\pi_{K}(2X) -
\pi_{K}(X))}
\]
Artin $L$-functions of degree $r$ and conductor $\leq q$. Here,
$\pi_{K}(X)$ is the number of prime ideals of $\mathcal{O}_{K}$ of
norm less than or equal to $X$. We have $M \leq \pi_{K}(2X) -
\pi_{K}(X) \leq 2 [K : \Q] X/\log(X)$ and so
\[
  \log(N) \leq \frac{2 r X [K : \Q]}{\log(X)} [\log(8 \pi r^2) + \log([K : \Q])].
\]
We have
\[
\log(X) \geq \log(2^{14}) + 2 \log(r) + 2 \log \log(q) > \log(8 \pi) + 2 \log(r) + \log( [K : \Q]),
\]
and hence
\[
  \log(N) \leq 2 \Cxbound r^{3} \log^{2}(q) [K : \Q].
\]
As $e^{r^{3} \log^{2}(q) [K : \Q]}$ increases rapidly in $r$, the number of
representations of conductor $\leq q$ and degree $r' \leq r$ is bounded by
\[
  \Crepexponent^{r^{3} \log^{2}(q) [K : \Q]}.
\]
for some absolute constant $\Crepexponent$.
\end{proof}

We now prove Theorem~\ref{thm_main} using Proposition~\ref{prop_artin_diff}.

\begin{proof}[Proof of Theorem \ref{thm_main}]

Assume $L/K$ is a Galois
extension whose irreducible complex
representations all have degree at most $r$. Choose the smallest
$A \geq e$ so that
\begin{equation}
\label{defnofA}
  \frac{[L : K]}{2r^{2}}
  \leq \sum_{i=1}^{r} \Crepexponent^{r^{2} (r - i) \log^{2}(A^{2}) [K : \Q]}.
\end{equation}
We wish to estimate $A$ in terms of $|\Disc(L)|$. By
Proposition \ref{prop_artin_diff}, the number of representations with degree $r-i$ and
conductor $\leq A^{\frac{2r}{r-i}}$ is
\[
  \leq \Crepexponent^{(r-i)^{3} \log^{2}(A^{\frac{2r}{r-i}}) [K : \Q]}
  = \Crepexponent^{4 r^{2} (r-i) \log^{2}(A^{2}) [K : \Q]}.
\]
Every other representation has $q(\rho)^{\deg \rho} \geq A^{2r}$.
There are at least $[L:K]/2r^{2}$ of these, and so \eqref{disc_eqn3} gives
\[
  |\Disc(L)| = \prod_{\rho \in \Irr(\Gal(L/K))}
  q(\rho)^{\deg \rho}
  \geq \left(A^{2r}\right)^{\frac{[L:K]}{2r^{2}}}
  = A^{\frac{[L:K]}{r}}.
\]
Thus, $\log(A) \leq \frac{r}{[L:K]} \log(|\Disc(L)|) = r \cdot [K : \Q]
\log(\rd_{L})$.

Equation~\eqref{defnofA} gives
\[
  \frac{[L:K]}{2r^{2}}
%  \leq \Crepexponent^{r^{3} \log^{2}(A^{2}) [K:\Q]} \cdot
 % \frac{1}{1  - \Crepexponent^{-r^{2} \log^{2}(A^{2}) [K:\Q]}}
  \leq 2\Crepexponent^{r^{3} \log^{2}(A^{2}) [K:\Q]},
\]
enlarging $\Crepexponent$ if necessary so that $\Crepexponent^{e^2} > 2$,
which gives
\begin{align*}
  \log([L:K]) & \leq \log(4 r^2) + r^{3} \log^{2}(A) [K : \Q] \log \Crepexponent \\
  & \leq \left(\log(4) + \log \Crepexponent\big) \cdot r^{5} [K : \Q]^{3} \log(\rd_{L}\right)^{2}.
\end{align*}
Hence, there is an absolute constant $\Ccond$ so that $r \geq \Ccond
\frac{\log([L:K])^{1/5}}{[K:\Q]^{3/5} \log(\rd_{L})^{2/5}}$.
\end{proof}

Finally, we prove Corollary~\ref{unramified} using Proposition
~\ref{prop_artin_diff}.

\begin{proof}
Let $L$ be the compositum of the $L_{i}$. Then, we have
\[
  \Gal(L/K) = \prod_{i=1}^{N} \Gal(L_{i}/K)
\]
From Theorem 4.21 of \cite{I}, if $\rho_{i} : \Gal(L_{i}/K) \to \GL_{r}(\C)$
is an irreducible representation, then the map $\tilde{\rho}_{i}(g) =
\rho_{i}(g|_{L_{i}})$ is also an irreducible representation of $\Gal(L/K)$
which is distinct from $\tilde{\rho}_{j}$ for $i \ne j$. All of these
representations have conductor $q = |\Disc(K)|^{r}$ and Proposition~\ref{prop_artin_diff} implies that there is an absolute constant $\Ccor$ so that
\[
  \log(N) \leq \Ccor r^{5} \log^{2}(|\Disc(K)|) [K : \Q],
\]
as desired.
\end{proof}

\end{document}